\documentclass{amsart}

\usepackage[latin1]{inputenc}
\usepackage[all]{xy}
\usepackage[english]{babel}
\usepackage{amssymb}
\usepackage[lite, initials]{amsrefs}

\newtheorem{teorema}{Theorem}[section]
\newtheorem{lemma}[teorema]{Lemma}
\newtheorem{propos}[teorema]{Proposition}

\theoremstyle{definition}

\newtheorem{defin}[teorema]{Definition}

\newcommand{\Ol}{\mathcal{O}}
\newcommand{\C}{\mathbb{C}}
\newcommand{\N}{\mathbb{N}}
\newcommand{\R}{\mathbb{R}}

\title{Union problem for holomorphically convex spaces}
\author[S.~Mongodi]{Samuele Mongodi\textsuperscript{1}}
\address{\textsuperscript{1}Politecnico di Milano, Dipartimento di Matematica, Via Bonardi, 9 -- I-20133 Milano, Italy}
\email{samuele.mongodi@polimi.it}
 \date{\today}

\begin{document}

\maketitle

\section{Introduction}

In this short note, we collect some results regarding the Remmert reduction of holomorphically convex space and its application to a variation of the usual union problem. Classically, the union problem asks the following question: is a complex space, which is an increasing union of Stein subspaces $X_1\Subset X_2\Subset\cdots$, a Stein space itself?

Many partial results were obtained. Many results give positive answers, for instance
\begin{itemize}
\item when $X$ is contained in $\C^n$ (\cite{BehSte})
\item when $X$ is contained in an unramified Riemann domain over $\C^n$ (\cite{GraRem2})
\item when $X$ is a reduced complex space and every pari $(X_j, X_{j+1})$ is Runge (\cite{Ste})
\item when $X$ is a reduced complex space and $H^1(X,\Ol)=0$ (\cites{Mar, Sil})
\item when $X\subset S$, with $S$ Stein, each $X_j$ is open in $S$ and $\dim H^1(X,\Ol)<+\infty$ (\cite{Tov})
\item when $X\subset S$, with $S$ a Stein manifold and each $X_j$ is open in $S$ (\cite{DocGra}, where actually more is proved).
\end{itemize}
We have also negative answers, as shown by the famous construction by Fornaess in \cite{For}.

The variation we are interested in is the following: is a complex space, which is an increasing union of holomorphically convex subspaces $X_1\Subset X_2\Subset\cdots$, holomorphically convex itself? The results presented here are close analogues of (some of) those listed above; our aim is only to collect such material for reference, as we consider it well known.

\section{Remmert reduction}

Let $(X,\Ol_X)$ be a complex analytic space (given as a locally ringed space); we recall that a function $f:X\to Y$ is holomorphic if and only if the induced map $(f,f_*):(X,\Ol_X)\to (Y,\Ol_Y)$ is a morphism of locally ringed spaces, i.e. if there is a morphism of sheaves of rings $T:\Ol_Y\to f_*\Ol_X$ which makes $f_*\Ol_X$ a module over $\Ol_Y$. In particular, if $f_*\Ol_X$ and $\Ol_Y$ coincide as sheaves of rings, the map is holomorphic.

In what follows, if we write an equality between sheaves, we always mean that there exists an isomorphism in the correct category (usually the one of sheaves of rings).

\medskip

\begin{defin}Suppose that $X$ is holomorphically convex, then we call a \emph{Remmert reduction} of $X$ a pair $(\phi, Y)$ where $Y$ is a Stein space and $\phi:X\to Y$ is a proper holomorphic map such that $\phi_*\Ol_X=\Ol_Y$ (as sheaves).
\end{defin}
\medskip

We can construct, following Cartan \cite{Car}, a Remmert reduction of $X$ as follows:
we say that $x,y\in X$ are equivalent if $f(x)=f(y)$ for all $f\in\Ol_X(X)$ and we call this relation $\sim$, then, by a theorem of Cartan, $Y=X/\sim$ is a complex analytic space and, if we call $\phi:X\to Y$ the quotient map, we have that
$$\phi_*\Ol_X=\Ol_Y$$
as sheaves, which implies that $\phi$ is a morphism of locally ringed spaces, hence, in this case, a holomorphic map.

\medskip

\begin{propos}\label{fatto1}Suppose $Z$ is a complex analytic space and $\psi:X\to Z$ is holomorphic and proper, then $\psi_*\Ol_X=\Ol_Z$ if and only if $\psi$ is surjective and has connected fibers.\end{propos}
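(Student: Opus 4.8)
The plan is to prove the two implications separately; throughout, $\psi^{\#}\colon\Ol_Z\to\psi_*\Ol_X$ denotes the canonical comorphism carried by the holomorphic map $\psi$. For the implication in which an isomorphism has to be produced it is enough to show that $\psi^{\#}$ itself is an isomorphism of sheaves of rings; for the converse implication we only use that the stalks of $\psi_*\Ol_X$ and of $\Ol_Z$ are isomorphic as rings.

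\emph{Necessity.} Suppose $\psi_*\Ol_X=\Ol_Z$. Since $\psi$ is proper, $\psi(X)$ is closed; if some $z_0\notin\psi(X)$, then $W:=Z\setminus\psi(X)$ is an open neighbourhood of $z_0$ with $\psi^{-1}(W)=\emptyset$, so $(\psi_*\Ol_X)(W)=0\neq\Ol_Z(W)$, which is absurd — hence $\psi$ is surjective. Next, if a fibre $\psi^{-1}(z_0)$ were disconnected, write it as $A\sqcup B$ with $A,B$ nonempty, disjoint and (by properness) compact; I would choose disjoint open sets $U\supseteq A$, $V\supseteq B$ in $X$ — possible because $X$ is metrizable, hence normal — and then, using properness again (the set $\psi(X\setminus(U\cup V))$ is closed and misses $z_0$), find an open $W\ni z_0$ with $\psi^{-1}(W)\subseteq U\cup V$. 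Then $\psi^{-1}(W)$ is the disjoint union of the two nonempty open sets $\psi^{-1}(W)\cap U$ and $\psi^{-1}(W)\cap V$, so $\Ol_X(\psi^{-1}(W))$ is a nontrivial product of rings; as the pieces stay nonempty while $W$ shrinks, the associated idempotent persists in $(\psi_*\Ol_X)_{z_0}\cong(\Ol_Z)_{z_0}$, contradicting that this stalk is a local ring and so has no nontrivial idempotent. Hence all fibres are connected.

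\emph{Sufficiency.} Assume now $\psi$ is surjective with connected fibres. That $\psi^{\#}$ is injective on stalks follows from surjectivity of $\psi$ (a germ whose pullback vanishes on $\psi^{-1}(W)$ vanishes pointwise on $W$, hence is $0$, as $Z$ is reduced). For surjectivity the heuristic is clear: a section $g\in\Ol_X(\psi^{-1}(W))$ is constant on each fibre $\psi^{-1}(w)$ — compact by properness, connected by hypothesis — so $g=\widetilde g\circ\psi$ for a function $\widetilde g\colon W\to\C$, which is continuous because a proper surjection is a topological quotient map; the content is that $\widetilde g$ is \emph{holomorphic}.

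Making this rigorous is the part I expect to be delicate. My plan is to pass through Grauert's direct image theorem: $\psi_*\Ol_X$ is then a coherent sheaf of $\Ol_Z$-algebras, so $Z':=\mathrm{Specan}_Z(\psi_*\Ol_X)$ is a complex space finite over $Z$, say via $g\colon Z'\to Z$ with $g_*\Ol_{Z'}=\psi_*\Ol_X$, and $\psi$ factors as $g\circ\psi'$ with $\psi'$ proper and $\psi'_*\Ol_X=\Ol_{Z'}$ (so $\psi'$ is surjective, by the Necessity part). Surjectivity of $\psi$ forces $g$ surjective, and if some $g^{-1}(z)$ had two points the fibre $\psi^{-1}(z)$ would split into at least two nonempty clopen subsets (the corresponding $\psi'$-fibres) and hence be disconnected; so $g$ is bijective, and being finite it is a homeomorphism, indeed bimeromorphic. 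Thus $g_*\Ol_{Z'}$ is a finite extension of $\Ol_Z$ inside the sheaf of meromorphic functions on $Z$, and if $Z$ is normal this forces $g_*\Ol_{Z'}=\Ol_Z$, i.e. $\psi_*\Ol_X=\Ol_Z$. This is precisely where normality of $Z$ enters, and some such hypothesis on $Z$ — normality, or at least weak normality — cannot be dropped from ``$\Leftarrow$'', as the example $\C\to\{y^2=x^3\}$, $t\mapsto(t^2,t^3)$, shows: it is proper, surjective and a homeomorphism, yet it does not induce an isomorphism on structure sheaves.
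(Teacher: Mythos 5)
Your \emph{necessity} argument is correct and genuinely different from the paper's: the paper deduces surjectivity and connectedness of the fibres from the Stein factorization ($\psi=g\circ f$ with $g$ finite, and $g_*\Ol_W=\Ol_Z$ forcing $g$ to be an isomorphism), whereas you argue directly on stalks --- the zero ring over $Z\setminus\psi(X)$ for surjectivity, and a persistent nontrivial idempotent in $(\psi_*\Ol_X)_{z_0}$ contradicting the locality of $\Ol_{Z,z_0}$ for connectedness. Your route is more elementary (no coherence of the direct image, no Stein factorization for this implication), and note that this is the only implication the paper actually uses later (in Theorem \ref{fatto2}, to get connectedness of the fibres of a Remmert reduction). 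A small remark: you do not need metrizability to separate $A$ and $B$; two disjoint compact sets in a Hausdorff space can always be separated by disjoint open sets.

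For the \emph{sufficiency} you are right to be suspicious, and your counterexample is valid: $\psi(t)=(t^2,t^3)$, $\psi:\C\to Z=\{y^2=x^3\}$, is proper, bijective, hence surjective with connected fibres, yet $(\psi_*\Ol_\C)_0=\C\{t\}$ while $\Ol_{Z,0}=\C\{t^2,t^3\}$, and these are not isomorphic even as abstract local rings; so not even the paper's convention that ``equality'' means ``some isomorphism of sheaves of rings'' rescues the implication as stated. The paper's own proof of this direction is exactly the heuristic you flag: it asserts that a fibrewise-constant holomorphic function ``can be pushed forward to a holomorphic function on $Z$'', which is precisely what fails at a non-normal point of $Z$. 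So this implication needs an extra hypothesis ($Z$ normal, or at least weakly normal, and $X$ reduced so that your $\mathrm{Specan}$/Stein-factorization reduction applies), under which your sketch --- $g:Z'\to Z$ finite, bijective, bimeromorphic, hence an isomorphism onto a normal target --- does complete the proof. Since the rest of the paper only needs the necessity direction (for the Remmert quotient, $\pi_*\Ol_X=\Ol_{X/R}$ holds by construction), the cleanest repair is to add such a hypothesis to the ``if'' implication, or to state only the ``only if'' part; your write-up, with the conditional sufficiency made explicit, is otherwise sound.
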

\begin{proof} If $\psi$ is proper, surjective and has connected fibers, then every function $f\in\Ol_X(X)$ is constant on each fiber of $\psi$, which means that $f$ can be pushed forward to a holomorphic function on $Z$, hence $\psi_*\Ol_X=\Ol_Z$.

On the other hand, if $\psi_*\Ol_X=\Ol_Z$, consider the Stein factorization of $\psi:X\to Z$, by $f:X\to W$ and $g:W\to Z$, where $W$ is an analytic space, $f,g$ are holomorphic maps, $f_*\Ol_X=\Ol_W$ and has connected fibers and $g$ is finite. As $\psi_*\Ol_X=\Ol_Z$, $\psi=g\circ f$ and $f_*\Ol_X=\Ol_W$, then $g_*\Ol_W=\Ol_Z$. But $g$ is a finite map, so $g_*\Ol_W$ has rank $1$ over $\Ol_Z$ if and only if $g$ is an isomorphism. So, $\psi$ is surjective and its fibers are connected. \end{proof}

\medskip

\begin{teorema}\label{fatto2}All the Remmert reductions of $X$ are isomorphic\end{teorema}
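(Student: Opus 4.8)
The plan is to fix two Remmert reductions $(\phi_1,Y_1)$ and $(\phi_2,Y_2)$ of $X$ and to produce a biholomorphism $\sigma\colon Y_1\to Y_2$ intertwining the two maps, i.e.\ with $\sigma\circ\phi_1=\phi_2$. The first step is to identify the fibers of $\phi_1$ and of $\phi_2$. Since $\phi_{i*}\Ol_X=\Ol_{Y_i}$, taking global sections gives an identification $\Ol_{Y_i}(Y_i)=\Ol_X(X)$ realised by pullback along $\phi_i$; because $Y_i$ is Stein, its global holomorphic functions separate points, so for $x,x'\in X$ one has $\phi_i(x)=\phi_i(x')$ if and only if $f(x)=f(x')$ for every $f\in\Ol_X(X)$. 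This characterisation of "being in the same fiber'' does not involve $i$, hence the fibers of $\phi_1$ and those of $\phi_2$ are exactly the same subsets of $X$; since both maps are surjective (Proposition~\ref{fatto1}), there is a unique bijection $\sigma\colon Y_1\to Y_2$ with $\sigma\circ\phi_1=\phi_2$, and likewise a unique bijection in the other direction which is necessarily $\sigma^{-1}$.

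Next I would check that $\sigma$ is a homeomorphism. A proper map into a locally compact Hausdorff space is closed, so $\phi_1$ is a closed continuous surjection, hence a topological quotient map; as $\phi_2=\sigma\circ\phi_1$ is continuous, the universal property of the quotient forces $\sigma$ to be continuous. Running the same argument with the roles of $\phi_1$ and $\phi_2$ exchanged shows that $\sigma^{-1}$ is continuous too, so $\sigma$ is a homeomorphism.

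Finally I would promote $\sigma$ to an isomorphism of locally ringed spaces. By functoriality of the direct image and the relation $\sigma\circ\phi_1=\phi_2$ we get
$$\sigma_*\Ol_{Y_1}=\sigma_*\phi_{1*}\Ol_X=(\sigma\circ\phi_1)_*\Ol_X=\phi_{2*}\Ol_X=\Ol_{Y_2}$$
as sheaves of rings on $Y_2$; as recalled at the beginning of this section, an identification of structure sheaves of this kind exhibits $\sigma$ as a holomorphic map, and the symmetric computation $\sigma^{-1}_*\Ol_{Y_2}=\Ol_{Y_1}$ shows that $\sigma^{-1}$ is holomorphic as well. Hence $\sigma$ is a biholomorphism with $\sigma\circ\phi_1=\phi_2$, which is exactly the assertion that the two Remmert reductions are isomorphic.

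The only genuinely delicate point is the identification of the fibers in the first step: it rests on the fact that \emph{both} target spaces are Stein, so that the common ring $\Ol_X(X)$ of global functions separates points on each of them. Once this is in place, properness takes care of the topology and functoriality of the pushforward takes care of the analytic structure, so the rest is essentially formal. One should also take care to use the isomorphisms $\phi_{i*}\Ol_X=\Ol_{Y_i}$ — which a priori are only defined up to a choice of isomorphism of sheaves of rings — coherently throughout the argument; this is harmless but worth keeping track of.
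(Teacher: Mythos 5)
Your proof is correct and follows essentially the same route as the paper: identify the fibers of both reductions as the level sets of the global functions in $\Ol_X(X)$ (using Steinness of the targets and the identification $\phi_{i*}\Ol_X=\Ol_{Y_i}$ on global sections), obtain a bijection intertwining the two projections, and transfer the structure sheaf by pushforward to conclude it is a biholomorphism. The only differences are cosmetic: the paper takes one of the two reductions to be Cartan's quotient and compares the other against it, whereas your argument treats the two symmetrically, and you spell out the continuity of $\sigma$ (properness giving a quotient map), a point the paper leaves implicit.
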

\begin{proof} If $(\phi, Y)$ is the Remmert reduction given by the Cartan construction and $(\phi', Y')$ is another, then we can construct a bijective map $a:Y\to Y'$. Indeed, consider a point $x\in X$ and the sets $\phi^{-1}(\phi(x))=A_x$ and $(\phi')^{-1}(\phi'(x))=B_x$; both sets are compact (by properness) and connected (by Proposition \ref{fatto1}); therefore, every holomorphic function $f\in\Ol_X(X)$ will be constant on $B_x$, hence $B_x\subseteq A_x$. Take $x_1\in X\setminus B_x$ and consider $y_0=\phi'(x)$ and $y_1=\phi'(x_1)$, which are two distinct points in $Y'$; as $Y'$ is Stein, there exists $g\in\Ol_{Y'}(Y')$ such that $g(y_0)\neq g(y_1)$ and, as $\phi'_*\Ol_X=\Ol_{Y'}$, there exists $h\in\Ol_X$ such that $h=g\circ\phi'$, so $h(x)\neq h(x_1)$, so $x_1\not\in A_x$. Therefore $A_x=B_x$.

Hence, we can define $a:Y\to Y'$ bijective that sends $y$ to $y'$ if $\phi^{-1}(y)=(\phi')^{-1}(y')$. Moreover, given $U$ open in $Y'$ and $g\in\Ol_{Y'}(U)$, we can find $f\in\Ol_{X}((\phi')^{-1}(U))$ such that $\phi'_*f=g$; therefore $\phi_*f=h\in\Ol_Y(\phi((\phi')^{-1}(U)))$ and $\phi((\phi')^{-1}(U))=a^{-1}(U)$ and $a_*g=h$, by construction. So $a_*\Ol_Y=\Ol_{Y'}$, so $a$ is holomorphic. It is easy to swap  $Y$ and $Y'$ and carry on the same argument on $a^{-1}$ showing it is holomorphic as well.

We have constructed a map $a:Y\to Y'$ which is biholomorphic; to conclude it is enough to show that, by construction $\phi'=a\circ\phi$, so $a$ is an isomorphism between $(\phi, Y)$ and $(\phi', Y')$. \end{proof}

Given $X$ holomorphically convex and $x\in X$, define $S_x$ as the union of all the compact connected complex analytic subspaces of $X$ that contain $x$.

\begin{lemma}\label{fatto3} If $X$ ia holomorphically convex, $S_x$ is compact for every $x$.\end{lemma}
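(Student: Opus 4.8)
The plan is to identify $S_x$ explicitly with a single fiber of the Remmert reduction. Let $(\phi,Y)$ be the Remmert reduction of $X$, which we may take to be the Cartan quotient (all of them are isomorphic by Theorem \ref{fatto2}); recall from Proposition \ref{fatto1} that $\phi$ is proper and surjective with connected fibers. Set $F=\phi^{-1}(\phi(x))$. I claim that $S_x=F$, and since $\phi$ is proper and $\{\phi(x)\}$ is compact, this immediately gives that $S_x$ is compact.

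First I would prove $F\subseteq S_x$. The fiber $F$ is a closed complex analytic subspace of $X$ (fibers of holomorphic maps are analytic subsets), it contains $x$, it is compact because $\phi$ is proper, and it is connected by Proposition \ref{fatto1}. Hence $F$ is itself one of the compact connected complex analytic subspaces of $X$ through $x$ that occur in the definition of $S_x$, so $F\subseteq S_x$.

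Next I would prove the reverse inclusion $S_x\subseteq F$. Let $A$ be any compact connected complex analytic subspace of $X$ with $x\in A$. Every $f\in\Ol_X(X)$ restricts to a holomorphic function on $A$, and a holomorphic function on a compact connected complex space is constant (maximum principle together with connectedness); thus $f$ is constant on $A$, necessarily equal to $f(x)$. By the very construction of the Remmert reduction, two points of $X$ lie in the same fiber of $\phi$ precisely when they are not separated by $\Ol_X(X)$, so $\phi$ is constant on $A$ with value $\phi(x)$, i.e. $A\subseteq F$. Taking the union over all such $A$ yields $S_x\subseteq F$, and therefore $S_x=F$ is compact.

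The proof is essentially formal once the Remmert reduction is in place; the only inputs worth flagging are that the fibers of $\phi$ are connected (Proposition \ref{fatto1}) and the standard fact that a compact connected complex space admits only constant global holomorphic functions. The one conceptual point — rather than an obstacle — is to notice that the fiber $F$ is not merely an upper bound for the sets in the union but actually one of them, which upgrades the trivial inclusion $S_x\subseteq F$ to an equality.
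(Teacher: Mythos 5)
Your proof is correct, but it takes a genuinely different route from the paper. The paper argues directly from holomorphic convexity: if $S_x$ were not compact, one could pick a sequence in $S_x$ escaping every compact set and, by holomorphic convexity, a global function unbounded along it; since every global function is constant on each compact connected analytic set through $x$, it is constant on all of $S_x$, a contradiction. That argument never mentions the Remmert reduction and is thus more elementary and self-contained; this matters for the architecture of the section, whose point is that Lemmas \ref{fatto3}--\ref{fatto4} and Proposition \ref{fatto5} give an \emph{alternative} description of the reduction via the relation $R$. You instead identify $S_x$ with the fiber $F=\phi^{-1}(\phi(x))$ of the Cartan quotient, using properness for compactness and Proposition \ref{fatto1} for connectedness of $F$; this is not circular within the paper's logic, since the Cartan construction of $(\phi,Y)$ is established before Lemma \ref{fatto3} and the paper itself uses $(\phi,Y)$ again in the proof of Proposition \ref{fatto5} (where, in effect, your inclusion $F\subseteq S_x$ reappears). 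What your approach buys is a sharper conclusion, $S_x=F$, which in particular makes $S_x$ visibly closed --- a point the paper's sequence argument glosses over, since strictly speaking it only excludes sequences escaping every compact, i.e.\ it shows relative compactness; what it costs is a reliance on the full strength of Cartan's quotient theorem where the paper only needs the definition of holomorphic convexity and the constancy of global functions on compact connected analytic subspaces.
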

\begin{proof} If $S_x$ is non compact for some $x\in X$, then there is a sequence of points $x_n\in S_x$ which escape every compact. Therefore, by holomorphic convexity, there exists a function $f\in\Ol_X(X)$ such that $f(x_n)=n$.

On the other hand
$$S_x=\bigcup_{i\in I} Z_i$$
where $Z_i$ is a compact connected analytic subspace of $X$ and $x\in Z_i$. Therefore, $f\vert_{Z_i}$ is constant and equal to $f(x)$, so $f$ is constant on $S_x$, a contradiction. Therefore $S_x$ is compact. \end{proof}

Given $x,x'\in X$, say that $xRx'$ if $S_x\cap S_{x'}\neq \emptyset$.

\medskip

\begin{lemma}\label{fatto4}$R$ is a proper relation.\end{lemma}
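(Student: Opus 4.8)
Here ``$R$ is a proper relation'' means, as usual, that $R$ is closed in $X\times X$ and that the $R$-saturation $R[K]=\{x'\in X:\exists\,x\in K,\ xRx'\}$ of every compact $K\subseteq X$ is compact. The relative-compactness part of this is just the argument of Lemma~\ref{fatto3} carried out over a compact base rather than a single point, while closedness needs a comparison with the Remmert reduction.

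The first step is the elementary remark that the union of two compact connected analytic subspaces sharing a point is again a compact connected analytic subspace. From it: if $z\in S_x\cap S_{x'}$ then $x$ and $x'$ lie in a common compact connected analytic subspace, hence $x'\in S_x$ and $x\in S_{x'}$, and iterating one finds that $xRx'\iff S_x=S_{x'}$, so $R$ is an equivalence relation with class $S_x$ through $x$; moreover $R[K]=\bigcup_{x\in K}S_x$ for any $K$. Now if $\bigcup_{x\in K}S_x$ were not relatively compact there would be $y_n\in S_{x_n}$, $x_n\in K$, escaping every compact of $X$; by holomorphic convexity pick $f\in\Ol_X(X)$ with $|f(y_n)|\to\infty$. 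As $f$ is constant on each compact connected analytic subspace and $S_{x_n}$ is a union of such through $x_n$, we get $f(y_n)=f(x_n)$, bounded since $x_n\in K$: contradiction.

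For closedness of $R$ (and for upgrading ``relatively compact'' to ``compact'') I would identify $S_x$ with the fibre $\phi^{-1}(\phi(x))$ of the Remmert reduction $\phi\colon X\to Y$. The inclusion $S_x\subseteq\phi^{-1}(\phi(x))$ is clear, since $\phi$ is constant on every compact connected analytic subspace and all those defining $S_x$ pass through $x$; the reverse inclusion holds because $\phi^{-1}(\phi(x))$ is a compact (properness of $\phi$), connected (Proposition~\ref{fatto1}, as $\phi_*\Ol_X=\Ol_Y$) analytic subspace of $X$ containing $x$, hence one of the sets in the union defining $S_x$. With this, $R=(\phi\times\phi)^{-1}(\Delta_Y)$ is closed because $Y$ is Hausdorff, and $R[K]=\phi^{-1}(\phi(K))$ is compact because $\phi(K)$ is compact and $\phi$ is proper. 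The only genuinely non-formal ingredient is the equality $S_x=\phi^{-1}(\phi(x))$, specifically its inclusion $\supseteq$, which is where Proposition~\ref{fatto1} and the properness of $\phi$ enter; this is also the one point that cannot be handled by merely re-running the escaping-sequence argument, so I expect the closedness of $R$ to be the main obstacle.
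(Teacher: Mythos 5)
Your proof is correct, and it does more than the paper's, partly by a different mechanism. The paper's own argument is the purely function-theoretic half of yours: it sets $K'=\bigcup_{x\in K}S_x$, observes that every $f\in\Ol_X(X)$ satisfies $f\vert_{S_x}\equiv f(x)$ and is therefore bounded on $K'$ by its bound on $K$, and concludes compactness from holomorphic convexity (i.e.\ $K'$ sits inside the $\Ol(X)$-hull of $K$); it says nothing about the graph of $R$ being closed, about $R$ being an equivalence relation, or about the identification with the fibres of the Remmert reduction -- implicitly, ``proper'' is taken in Cartan's sense, where only compactness of saturations of compacta is required, and Hausdorffness of $X/R$ comes later from the separation property of Proposition~\ref{fatto5}. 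Your escaping-sequence version of this step is the same idea (it is the argument of Lemma~\ref{fatto3} over a compact base, as you say), and your explicit verification that $R$ is an equivalence relation with $R[K]=\bigcup_{x\in K}S_x$ fills in something the paper glosses over. The genuinely different ingredient is your second half: identifying $S_x=\phi^{-1}(\phi(x))$ and writing $R=(\phi\times\phi)^{-1}(\Delta_Y)$, which buys you closedness of $R$ and honest compactness (not just relative compactness) of $R[K]$ in one stroke. This is legitimate within the paper's logic, since the Remmert reduction is already available from Cartan's construction before these lemmas and Proposition~\ref{fatto5} uses it in exactly the same way; the trade-off is that it ties the description of $X/R$ to the previously constructed reduction, so if one wanted the quotient by $R$ as an independent construction of the Remmert reduction, that half of your argument would be circular, whereas the paper's proof (and your first half) would survive unchanged.
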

\begin{proof} Take $K\Subset X$ and consider
$$K'=\bigcup_{x\in K}S_x\;.$$
As $X$ is holomorphically convex, if every holomorphic function is bounded on $K'$, then $K'$ is compact; so, consider $f\in\Ol_X(X)$. Obviously, $f$ is bounded on $K$ and $f\vert_{S_x}\equiv f(x)$, which implies that $f\vert_{K'}$ is bounded. \end{proof}
\medskip

\begin{propos}\label{fatto5}Define $\pi:X\to X/R$ and $\Ol_{X/R}$ in the usual way, then the global sections of $\Ol_{X/R}$ separate points in $X/R$.\end{propos}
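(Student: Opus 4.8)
The plan is to reduce the statement to a property of the Remmert reduction of $X$. First I would record the structure of $R$: since a finite union of analytic subsets is again analytic, if $x'\in S_x$ then $S_x\cup Z$ (for any compact connected analytic $Z\ni x'$) is a compact connected analytic subspace containing $x$, whence $S_{x'}=S_x$; consequently $R$ is an equivalence relation and the $R$-class of $x$ is precisely $S_x$, so the fibres of $\pi\colon X\to X/R$ are the sets $S_x$. With $\Ol_{X/R}$ defined in the usual way, a holomorphic function on $\pi^{-1}(V)$ is automatically constant on each fibre $S_x\subseteq\pi^{-1}(V)$ — by the argument already used in the proof of Lemma \ref{fatto3}, $f|_{S_x}\equiv f(x)$ — so $\Ol_{X/R}=\pi_*\Ol_X$ and, in particular, $\Gamma(X/R,\Ol_{X/R})=\Ol_X(X)$. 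Thus the assertion is equivalent to: \emph{if $S_x\cap S_{x'}=\emptyset$, then there is $f\in\Ol_X(X)$ with $f(x)\neq f(x')$.}

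To prove this, let $(\phi,Y)$ be the Remmert reduction of $X$ (it exists by Cartan's construction, and is unique by Theorem \ref{fatto2}). The key observation is that $\phi^{-1}(\phi(x))\subseteq S_x$ for every $x\in X$: the fibre $\phi^{-1}(\phi(x))$ is an analytic subspace of $X$, being the preimage of the point $\phi(x)$ under a holomorphic map; it is compact because $\phi$ is proper; it is connected by Proposition \ref{fatto1}; and it contains $x$. Hence it is one of the compact connected analytic subspaces whose union defines $S_x$.

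It remains to conclude. Suppose $S_x\cap S_{x'}=\emptyset$. If $\phi(x)=\phi(x')$, then the common fibre $\phi^{-1}(\phi(x))$ would be contained in $S_x\cap S_{x'}$ by the previous paragraph, yet it contains $x$; this contradiction forces $\phi(x)\neq\phi(x')$. Since $Y$ is Stein, there is $g\in\Ol_Y(Y)$ with $g(\phi(x))\neq g(\phi(x'))$; then $f:=g\circ\phi\in\Ol_X(X)$ separates $x$ and $x'$ and descends to a global section of $\Ol_{X/R}$ separating $\pi(x)$ and $\pi(x')$. I do not expect a serious obstacle: the only point needing care is the elementary verification that the fibres of $\phi$ sit inside the sets $S_x$, which rests entirely on Proposition \ref{fatto1}. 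One can moreover note (though it is not needed here) that the reverse inclusion holds as well: by Remmert's proper mapping theorem the image under $\phi$ of a compact connected analytic subspace is a compact connected analytic subset of the Stein space $Y$, hence a single point, so in fact $S_x=\phi^{-1}(\phi(x))$ and $(X/R,\Ol_{X/R})$ is isomorphic to $Y$, i.e. it is itself the Remmert reduction of $X$.
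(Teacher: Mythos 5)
Your proof is correct and is essentially the paper's own argument run in the contrapositive: both rest on the fact that the fibres of the Remmert reduction are compact, connected analytic subspaces (properness plus Proposition \ref{fatto1}), so that two points not separated by $\Ol_X(X)$ lie in a common fibre and hence have $S_x\cap S_{x'}\neq\emptyset$, while the Steinness of $Y$ (equivalently, Cartan's construction of $\phi$) supplies the separating global function otherwise. One small repair in your preliminary paragraph: $S_x$ is not yet known to be analytic (it is a possibly infinite union of analytic subspaces), so to see that $x'\in S_x$ forces $S_{x'}=S_x$ you should pick a single compact connected analytic $Z_i$ containing both $x$ and $x'$ and argue with $Z_i\cup Z$ rather than with $S_x\cup Z$.
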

\begin{proof}Take $z_1,\  z_2\in X/R$ and $x_1,\ x_2\in X$ such that $\pi(x_j)=z_j$ for $j=1,2$. If every holomorphic function $f\in\Ol_X(X)$ is such that $f(x_1)=f(x_2)$, then $\phi(x_1)=\phi(x_2)$, where $(\phi, Y)$ is the Remmert reduction of $X$, so $x_1$ and $x_2$ are contained in the same fiber of $\phi$, which is a compact connected analytic space, so $S_{x_1}\cap S_{x_2}\neq\emptyset$, so $x_1Rx_2$, so $\pi(x_1)=\pi(x_2)$, so $z_1=z_2$.

So, if $z_1\neq z_2$, we can find $f\in\Ol_X(X)$ such that $f(x_1)\neq f(x_2)$. Now, $\pi_*f\in \Ol_{X/R}(X/R)$ separates $z_1$ and $z_2$. \end{proof}

By Lemma \ref{fatto4} and Proposition \ref{fatto5}, we can apply Cartan's result on quotients by analytic relations (see \cite{Car}) and obtain that $(X/R, \Ol_{X/R})$ is a complex analytic space. Moreover, $\pi_*\Ol_X=\Ol_{X/R}$ by definition, so $\pi:X\to X/R$ is a proper, surjective, holomorphic map. Also, $X/R$ is holomorphically convex, as $X$ is, and elements of $\Ol_{X/R}(X/R)$ separate points, so $X/R$ is Stein. Therefore, $(\pi, X/R)$ is the Remmert reduction of $X$, so it is isomorphic to $(\phi, Y)$.

\medskip

\begin{teorema}[Universal property of the Remmert reduction]\label{fatto6} If $(\pi, Y)$ is the Remmert reduction of $X$, $S$ is a Stein space and $f:X\to S$ is a holomorphic map, then there exists $g:Y\to S$ such that $g\circ\pi=f$.\end{teorema}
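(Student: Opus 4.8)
The plan is to construct $g$ first as a map of sets — its value being forced by the required identity $g\circ\pi=f$ — and then to promote it to a holomorphic map using the defining property $\pi_*\Ol_X=\Ol_Y$.

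First I would observe that, because $\pi_*\Ol_X=\Ol_Y$, the pullback $\Ol_Y(Y)\to\Ol_X(X)$, $s\mapsto s\circ\pi$, is an isomorphism; hence every global holomorphic function on $X$ has the form $s\circ\pi$ and is therefore constant on each fibre of $\pi$. By Proposition \ref{fatto1}, $\pi$ is moreover surjective. Now for any $h\in\Ol_S(S)$ the composition $h\circ f$ lies in $\Ol_X(X)$, so it is constant on every fibre $\pi^{-1}(y)$; since $S$ is Stein its global holomorphic functions separate points, and therefore $f$ itself is constant on every fibre of $\pi$. Consequently $g(y):=f(x)$, for an arbitrary $x\in\pi^{-1}(y)$, is a well-defined map $g\colon Y\to S$ with $g\circ\pi=f$ (the fibre is non-empty by surjectivity). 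It is continuous because $\pi$, being a proper surjection onto the locally compact Hausdorff space $Y$, is a closed map and hence a topological quotient, so continuity of $g\circ\pi=f$ forces that of $g$.

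It remains to check that $g$ is holomorphic, i.e. that there is a comorphism $\Ol_S\to g_*\Ol_Y$ making $g$ a morphism of locally ringed spaces. Fix $U\subseteq S$ open, set $V:=g^{-1}(U)$ (open in $Y$), and note $\pi^{-1}(V)=f^{-1}(U)$. For $h\in\Ol_S(U)$ we have $h\circ f\in\Ol_X(f^{-1}(U))=\Ol_X(\pi^{-1}(V))$; since $\pi_*\Ol_X=\Ol_Y$, there is a unique $s\in\Ol_Y(V)$ with $s\circ\pi=h\circ f=(h\circ g)\circ\pi$, and surjectivity of $\pi$ gives $s=h\circ g$. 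Thus $h\circ g\in\Ol_Y(V)$, and $h\mapsto h\circ g$ is a morphism of sheaves of rings over $g$ (it commutes with restrictions, and on stalks it carries germs vanishing at $g(y)$ to germs vanishing at $y$, hence is local); so $g$ is holomorphic, and $g\circ\pi=f$ is the asserted factorisation.

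The only step that is more than bookkeeping is the last one — certifying, from the knowledge that $h\circ g$ is holomorphic for every local section $h$ of $\Ol_S$, that $g$ really is a morphism of complex spaces. But this is precisely the kind of verification already carried out for the map $a$ in the proof of Theorem \ref{fatto2}, so no new idea is needed; everything else is forced by $g\circ\pi=f$ together with the description of the fibres of $\pi$ and the holomorphic separability of $S$.
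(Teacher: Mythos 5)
Your proof is correct, but it follows a genuinely different route from the paper's. The paper invokes a proper holomorphic embedding $F\colon S\to\C^N$ (Remmert's embedding theorem), writes $F\circ f=(h_1,\ldots,h_N)$ with $h_j\in\Ol(X)$, factors each $h_j=g_j\circ\pi$ using $\pi_*\Ol_X=\Ol_Y$ on global sections, and sets $g=G\circ(g_1,\ldots,g_N)$ with $G=F^{-1}$ on $F(S)$; the whole argument is three lines once the embedding is available. You instead build $g$ intrinsically: fibre-constancy of all global functions on $X$ plus holomorphic separability of $S$ forces $f$ to be constant on the fibres of $\pi$, surjectivity of $\pi$ (Proposition \ref{fatto1}) makes $g$ well defined with $g\circ\pi=f$, properness makes $\pi$ a topological quotient and hence $g$ continuous, and the comorphism $h\mapsto h\circ g$ comes straight from $\pi_*\Ol_X=\Ol_Y$ applied over $V=g^{-1}(U)$, exactly in the spirit of the verification for the map $a$ in Theorem \ref{fatto2}. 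What your version buys: it avoids the embedding theorem altogether (which strictly speaking requires $S$ to have bounded embedding dimension, a hypothesis the paper's proof uses silently), it only uses that global functions on $S$ separate points rather than the full Stein property, and it makes explicit the small point the paper glosses over, namely why $(g_1,\ldots,g_N)$ lands in $F(S)$ (again surjectivity of $\pi$). What the paper's version buys is brevity. One caveat common to both arguments: identifying the section $s\in\Ol_Y(V)$ with the function $h\circ g$ via surjectivity of $\pi$ treats sections as determined by their values, which is fine for reduced spaces and is the level of rigor the paper itself adopts throughout.
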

\begin{proof} There is a proper holomorphic embedding $F:S\to\C^N$, so we consider the map $F\circ f=(h_1,\ldots, h_N)$ with $h_j\in\Ol(X)$. Therefore, we have $g_j\in\Ol(Y)$ such that $h_j=g_j\circ \pi$, for $j=1,\ldots, N$. Let $G:F(S)\to S$ be the inverse of $F$ on $F(S)$, then $g=G\circ(g_1,\ldots, g_N)$ is the required map. \end{proof}

\section{Runge pairs}

Let $X_1\subset X_2\subset X_3\subset\ldots\subset X_n\subset X_{n+1}\subset\ldots\subset X$ be a sequence of increasing domains in a complex space $X$ and suppose that $X_j$ is relatively compact and Runge in $X_{j+1}$ for every $j$, i.e. the restriction map $r:\Ol_{X_{j+1}}(X_{j+1})\to\Ol_{X_j}(X_j)$ has dense image, i.e. for every $K\Subset X_j$ and every $f:X_j\to\C$ holomorphic, we can find a sequence $\{f_k\}_{k\in\N}$ of holomorphic functions on $X_{j+1}$ such that
$$\sup_{x\in K}|f_k(x)-f(x)|\to 0\qquad \textrm{as}\ \ k\to\infty\;.$$

\medskip

\begin{lemma}\label{fatto7}Let $X$ be a complex space and $X'\subset X$ an open domain. Suppose that $X$ and $X'$ are both holomorphically convex and that $X'$ is Runge in $X$, let $(\pi', Y')$ and $(\pi,Y)$ be the Remmert reductions of $X'$ and $X$, then $Y'$ embeds in $Y$ as an open domain.\end{lemma}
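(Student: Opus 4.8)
The plan is to build the embedding from the universal property and then to analyse the fibres. Let $\iota\colon X'\hookrightarrow X$ be the inclusion; since $\pi\circ\iota\colon X'\to Y$ is holomorphic into the Stein space $Y$, Theorem~\ref{fatto6} produces a holomorphic map $g\colon Y'\to Y$ with $g\circ\pi'=\pi\circ\iota$. I want to show that $g$ is a biholomorphism of $Y'$ onto an open subspace of $Y$, and I would deduce everything from the single claim that $\pi^{-1}(g(y'))=(\pi')^{-1}(y')$ for every $y'\in Y'$.

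Both sides are compact and connected, by properness of $\pi$ and $\pi'$ together with Proposition~\ref{fatto1}, and $(\pi')^{-1}(y')\subseteq\pi^{-1}(g(y'))$ is immediate from $g\circ\pi'=\pi\circ\iota$. For the reverse inclusion I would argue in two moves. First, $X'\cap\pi^{-1}(g(y'))=(\pi')^{-1}(y')$: if $z\in X'$ with $\pi(z)=g(y')$, choose $x'\in(\pi')^{-1}(y')$, so $\pi(z)=\pi(x')$; then $f(z)=f(x')$ for every $f\in\Ol_X(X)$, since $\pi_*\Ol_X=\Ol_Y$ forces each global holomorphic function on $X$ to factor through $\pi$; as $\{z,x'\}$ is a compact subset of $X'$ and $X'$ is Runge in $X$, each $h\in\Ol_{X'}(X')$ is a uniform limit on $\{z,x'\}$ of restrictions of functions in $\Ol_X(X)$, whence $h(z)=h(x')$, so $\pi'(z)=\pi'(x')=y'$. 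Second, $(\pi')^{-1}(y')$ is compact, hence closed in $X$ and therefore closed in $\pi^{-1}(g(y'))$, while it is also open there, being $X'\cap\pi^{-1}(g(y'))$ with $X'$ open; since $\pi^{-1}(g(y'))$ is connected and the set is nonempty, the two coincide.

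Granting the claim, taking the union over $y'\in Y'$ gives $\pi^{-1}(g(Y'))=X'$, so $g(Y')$ is open in $Y$ because $\pi$ is a proper, hence closed, continuous surjection and $X'$ is open. The claim also yields injectivity of $g$: if $g(y_1')=g(y_2')$ then $(\pi')^{-1}(y_1')=(\pi')^{-1}(y_2')\neq\emptyset$, so $y_1'=y_2'$. Next, $\pi|_{X'}\colon X'\to g(Y')$ is the restriction of the proper map $\pi$ to the preimage of the open set $g(Y')$, hence proper, and it is surjective with connected fibres, so Proposition~\ref{fatto1} gives $(\pi|_{X'})_*\Ol_{X'}=\Ol_{g(Y')}$; combining this with $\pi'_*\Ol_{X'}=\Ol_{Y'}$ and $g\circ\pi'=\pi|_{X'}$ gives $g_*\Ol_{Y'}=\Ol_{g(Y')}$. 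Finally $g\colon Y'\to g(Y')$ is bijective and proper (the composite $g\circ\pi'$ equals the proper map $\pi|_{X'}$ and $\pi'$ is a continuous surjection), hence finite, and a finite map whose push-forward of the structure sheaf is the structure sheaf of the target is an isomorphism, exactly as at the end of the proof of Proposition~\ref{fatto1}. Thus $g$ identifies $Y'$ with the open subspace $g(Y')\subseteq Y$.

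The genuinely delicate point is the inclusion $\pi^{-1}(g(y'))\subseteq X'$: the Runge hypothesis only controls the part of the fibre of $\pi$ that already lies inside $X'$, and to extend this to the whole fibre one really uses that fibres of a Remmert reduction are compact and connected. Once that is in place, the rest is formal bookkeeping with push-forwards of structure sheaves via Proposition~\ref{fatto1} and Theorem~\ref{fatto6}.
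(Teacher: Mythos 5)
Your argument is correct, but after the common first step of producing $g=\sigma\colon Y'\to Y$ from Theorem~\ref{fatto6} it takes a genuinely different route from the paper's. The paper invokes the Runge hypothesis twice: once to separate two points of $Y'$ having the same $\sigma$-image (injectivity), and once to push an unbounded function on $Y'$ to a global function on $X$ that would be unbounded on the compact set $\pi^{-1}(K)$, giving properness of $\sigma$ over compact subsets of $\sigma(Y')$; it then concludes that $\sigma$ is an embedding. You instead reduce everything to the single fiber identity $\pi^{-1}(g(y'))=(\pi')^{-1}(y')$, proved by applying Runge only on two-point compact subsets of $X'$ (to see that points of $X'$ identified by $\pi$ are identified by $\pi'$) and then running an open--closed argument inside the connected $\pi$-fiber; injectivity, the equality $\pi^{-1}(g(Y'))=X'$, openness of $g(Y')$ via closedness of the proper surjection $\pi$, and the identification $g_*\Ol_{Y'}=\Ol_{g(Y')}$ followed by the finite-bijective-map conclusion are then formal consequences, using both directions of Proposition~\ref{fatto1}. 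Your route actually supplies details the paper glosses over: the paper's assertion that $L=\pi^{-1}(K)$ is contained in $j(X')$ is exactly your delicate claim and is not justified there, and the paper never spells out why $\sigma(Y')$ is open in $Y$ nor why $\sigma$ is biholomorphic onto its image, whereas your bookkeeping with Proposition~\ref{fatto1} does. What the paper's route buys in exchange is brevity: the unbounded-function trick gives properness of $\sigma$ onto its image in a few lines, without appealing to the sufficiency direction of Proposition~\ref{fatto1} or to the finiteness argument. Both proofs operate at the same level of rigour regarding sheaf equalities (canonical versus abstract isomorphisms), so you introduce no new gap on that score.
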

\begin{proof}Let $j:X'\to X$ be the inclusion and consider the map $f:X'\to Y$ given by $f=\pi\circ j$. By the universal property of the Remmert reduction, as $Y$ is Stein, we can factor $f$ as $\sigma\circ \pi'$, with $\sigma:Y'\to Y$ holomorphic.

Now, suppose that we have two points $u,\ v\in Y'$ such that $\sigma(u)=\sigma(v)$, then we have two points $z,w\in X'$ such that $\pi'(z)\neq\pi'(w)$ but $f(z)=f(w)$, therefore, as $j$ is injective, $j(z)=a$, $j(w)=b$, $a,b\in X$ and $a\neq b$, with $\pi(a)=\pi(b)$. As $Y'$ is Stein, there exists a function $\phi:Y'\to\C$ which separates $u$ from $v$, hence $\psi=\phi\circ \pi'$ is a holomorphic function on $X'$ which has different values in $z$, $w$. By the Runge property, we can find a holomorphic function $h:X\to\C$ which approximates $\psi$ as well as we want on $X'$, therefore, we can find such an $h$ with different values in $a$, $b$, but this contraddicts the fact that $\pi(a)=\pi(b)$.

Therefore $\sigma$ is injective. Moreover, let $K$ be a compact subset of $Y$ which is contained in $\sigma(Y')$; if $\sigma^{-1}(K)$ is not compact, we can find a sequence $\{y_n\}\subset \sigma^{-1}(K)$ that escapes every compact of $Y'$, so, as $Y'$ is Stein, we have a holomorphic function $f:Y'\to\C$ such that $|f(y_n)|\to+\infty$.

The set $L=\pi^{-1}(K)$ is compact in $X$ and is contained in $j(X')$. By the Runge property, we can approximate $f\circ \pi'$ on $K$ with a function $g\in\Ol_X(X)$, so $|f\circ\pi' - g|<1$ on $L$, so $g$ is unbounded on $L$, hence $L$ is not compact, which is a contradiction. Hence $\sigma^{-1}(K)$ is compact in $Y'$.

So $\sigma$ is an embedding of $Y'$ into $Y$. \end{proof}

\medskip

\begin{teorema}[Oka-Weil for holomorphically convex spaces]\label{fatto8} Let $S$ be a holomorphically convex space and $K\Subset S$ a $\Ol(S)$-convex compact set, then every function holomorphic in a neighborhood of $K$ can be approximated uniformly on $K$ by functions in $\Ol(S)$.\end{teorema}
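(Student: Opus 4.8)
The plan is to reduce the statement to the classical Oka-Weil theorem on Stein spaces by passing to the Remmert reduction $(\pi, Y)$ of $S$ and transporting everything through $\pi$. The key fact that makes this work is that $\pi_*\mathcal{O}_S = \mathcal{O}_Y$, so holomorphic functions on $S$ (or on open subsets of $S$) correspond exactly to holomorphic functions on $Y$ (or on the corresponding saturated open subsets of $Y$), and properness of $\pi$ lets me move compact sets back and forth.

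\smallskip

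\noindent\emph{First step: push $K$ down to $Y$.} Let $L = \pi(K)$. Since $\pi$ is proper, $L$ is compact in $Y$. I claim $L$ is $\mathcal{O}(Y)$-convex. Indeed, if $y_0 \in Y \setminus L$, then $\pi^{-1}(y_0)$ is disjoint from $K$, and picking any $x_0 \in \pi^{-1}(y_0)$, the $\mathcal{O}(S)$-convexity of $K$ gives $f \in \mathcal{O}(S)$ with $|f(x_0)| > \sup_K |f|$; writing $f = g \circ \pi$ with $g \in \mathcal{O}(Y)$ (using $\pi_*\mathcal{O}_S = \mathcal{O}_Y$, i.e. the universal property applied to $f: S \to \mathbb{C}$) yields $|g(y_0)| > \sup_L |g|$. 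Hence $L = \widehat{L}_{\mathcal{O}(Y)}$.

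\smallskip

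\noindent\emph{Second step: transport the germ.} Let $U \supset K$ be open in $S$ with $u \in \mathcal{O}(U)$ the function to be approximated. Since $\pi$ is a closed map, there is an open saturated set $V$ in $Y$ with $L \subset V$ and $\pi^{-1}(V) \subset U$ (take $V = Y \setminus \pi(S \setminus U)$). On $\pi^{-1}(V)$ the restriction of $u$ is a holomorphic function, and since $\pi|_{\pi^{-1}(V)}$ has connected fibers with $\pi_*\mathcal{O}_S = \mathcal{O}_Y$ over $V$, the function $u$ is constant on each fiber and descends to $v \in \mathcal{O}(V)$ with $v \circ \pi = u$ on $\pi^{-1}(V)$. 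Now $v$ is holomorphic in a neighborhood $V$ of the $\mathcal{O}(Y)$-convex compact set $L$ in the Stein space $Y$, so by the classical Oka-Weil approximation theorem on Stein spaces there is a sequence $v_k \in \mathcal{O}(Y)$ with $\sup_L |v_k - v| \to 0$. Pulling back, $f_k := v_k \circ \pi \in \mathcal{O}(S)$ and $\sup_K |f_k - u| = \sup_K |v_k \circ \pi - v \circ \pi| \le \sup_L |v_k - v| \to 0$, which is the desired conclusion.

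\smallskip

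\noindent\emph{Main obstacle.} The only genuinely nontrivial input is the classical Oka-Weil theorem for Stein spaces (approximation on $\mathcal{O}(Y)$-convex compacta of functions holomorphic near them), which I am invoking as known. Within the argument itself, the step requiring a little care is the construction of the saturated neighborhood $V$ and the verification that $u$ descends to $V$ — this uses properness of $\pi$ (to get $V$ open) together with $\pi_*\mathcal{O}_S = \mathcal{O}_Y$ restricted over $V$ (to get the descent), both of which are available from the definition of the Remmert reduction and Proposition \ref{fatto1}. Everything else is routine manipulation of suprema under the surjection $\pi$.
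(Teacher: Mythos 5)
Your argument is correct and takes essentially the same route as the paper: pass to the Remmert reduction $\pi$, check that $\pi(K)$ is $\Ol$-convex in the Stein reduction, descend the given function over a saturated neighborhood of $K$ using $\pi_*\Ol_S=\Ol_Y$, apply the classical Oka--Weil theorem there, and pull back. The one step you leave implicit --- that $L=\pi(K)$ is contained in $V=Y\setminus\pi(S\setminus U)$ --- requires knowing $K=\pi^{-1}(\pi(K))$, which does follow from $\Ol(S)$-convexity since every global function factors through $\pi$; this is precisely the fact the paper dismisses as ``easy to see''.
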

\begin{proof} Let $\pi:S\to T$ be the Remmert reduction of $S$; it is easy to see that $K$ is $\Ol(S)$-convex if and only if $K=\pi^{-1}(\pi(K))$ and $\pi(K)$ is $\Ol(T)$-convex. Given $U$ a neighborhood of $K$, we can find an open set $V\subset U$ such that $K\Subset V$ and $\pi^{-1}(\pi(V))=V$, so, given $f\in\Ol_S(U)$, we can restrict it to $V$ and notice that, by the alternative description we gave of the Remmert reduction, $f=g\circ p$ for some $g\in\Ol_T(\pi(V))$.

Now, we can apply the usual Oka-Weil theorem to the compact set $\pi(K)$ in the Stein space $T$, for the holomorphic function $g$; we obtain a sequence of functions $g_j\in\Ol(T)$ such that $g_j\to g$ uniformly on $K$. Define $f_j=g_j\circ \pi$, then it is easy to show that $f_j\in\Ol(S)$ and $f_j\to f$ uniformly on $K$. \end{proof}

\medskip

\begin{teorema}\label{fatto9}If each $X_j$ is holomorphically convex, then $Y=\bigcup X_j$ is holomorphically convex.\end{teorema}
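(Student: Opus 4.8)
The strategy is to reduce the statement to the Runge-pair case (Lemma \ref{fatto7}) together with the classical union theorem for Stein spaces under Runge exhaustions, by passing to Remmert reductions levelwise. First I would pass to Remmert reductions: each $X_j$ is holomorphically convex, so it has a Remmert reduction $(\pi_j, Y_j)$ with $Y_j$ Stein. The inclusions $X_j\Subset X_{j+1}$ induce, via the universal property (Theorem \ref{fatto6}), holomorphic maps $\sigma_j:Y_j\to Y_{j+1}$ with $\sigma_j\circ\pi_j=\pi_{j+1}\vert_{X_j}$. I would then form the direct limit $Y_\infty=\varinjlim Y_j$ (equivalently, glue the $Y_j$ along the $\sigma_j$) and argue that $Y=\bigcup X_j$ admits a proper holomorphic map $\pi_\infty:Y\to Y_\infty$ which is surjective with connected fibers, so that by Proposition \ref{fatto1} it realizes $Y_\infty$ as a candidate Remmert reduction once we know $Y_\infty$ is Stein; then $Y$ holomorphically convex follows because $\pi_\infty$ is proper and $Y_\infty$ is Stein (the preimage of an $\Ol(Y_\infty)$-convex compact is $\Ol(Y)$-convex and compact).

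Here a genuine difficulty appears: the maps $\sigma_j$ need not be open embeddings in general — Lemma \ref{fatto7} gives that only when $X_j$ is Runge in $X_{j+1}$, which is not assumed here. So the cleanest route is instead to use Theorem \ref{fatto8} (Oka–Weil for holomorphically convex spaces) directly on $Y$, mimicking the classical proof of the union problem. Concretely, I would show that $Y$ is holomorphically convex by producing, for any sequence $\{p_n\}\subset Y$ escaping every compact, a function $f\in\Ol(Y)$ unbounded on $\{p_n\}$. Pick an exhaustion by the $X_j$ and a strictly increasing sequence of indices; on each $X_{j}$ one has enough holomorphic functions (since $X_j$ is holomorphically convex, hence its $\Ol(X_j)$-convex compacts are compact), and the task is to patch local approximants into a single global $f$ via a Mittag-Leffler / telescoping series, using Theorem \ref{fatto8} at each stage to approximate the partial sum on the previously controlled compact set. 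The standard device: choose compacts $K_j\Subset X_j$ with $K_j\subset \mathrm{int}\,K_{j+1}$, and after replacing $K_j$ by its $\Ol(X_j)$-hull (still compact, still contained in $X_{j+1}$, and one checks one can keep it $\Ol(X_{j+1})$-convex by enlarging) build $f=\sum_j (g_{j+1}-g_j)$ where $g_j\in\Ol(X_j)$ and $\|g_{j+1}-g_j\|_{K_j}<2^{-j}$, arranging $|g_j|$ to be large at the chosen escaping points.

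The main obstacle is exactly the convexity bookkeeping: one must produce an exhaustion $\{K_j\}$ with $K_j$ compact, $K_j\subset\mathring K_{j+1}$, $K_j=\hat K_j^{\,\Ol(X_j)}$ \emph{and} such that $K_j$ is $\Ol(X_{j+1})$-convex, so that Theorem \ref{fatto8} applies in $X_{j+1}$ to approximate functions defined near $K_j$. On a general holomorphically convex (non-Stein) space the hull $\hat K^{\,\Ol(X)}$ is compact but one has to check its behavior under the inclusion $X_j\hookrightarrow X_{j+1}$; I expect this to go through by passing to the Remmert reductions $Y_j$ (where hulls are the classical Stein hulls and behave well under the maps $\sigma_j$), proving the exhaustion statement there, and pulling back along the proper maps $\pi_j$. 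Once that exhaustion is in hand, the telescoping-series construction and the verification that the resulting $f\in\Ol(Y)$ is unbounded on the escaping sequence are routine, and holomorphic convexity of $Y=\bigcup X_j$ follows.
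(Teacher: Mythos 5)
Your second sketch --- a telescoping series of approximations produced by Theorem \ref{fatto8} at each level, on compacts $K_j\Subset X_j$ that are $\Ol(X_{j+1})$-convex, arranged to blow up along a sequence escaping every compact --- is in substance the paper's proof. But there is a genuine gap at the point where you explicitly set the Runge hypothesis aside (``which is not assumed here''): Theorem \ref{fatto9} is stated under the standing assumptions of the section, namely that each $X_j$ is relatively compact \emph{and Runge} in $X_{j+1}$. Without that hypothesis the statement is false, not just harder: an increasing union of Stein open sets contains no compact positive-dimensional analytic subsets (any such would sit inside some Stein $X_j$), so if the union were holomorphically convex its Remmert reduction would have zero-dimensional connected fibres and the union would already be Stein; Fornaess's increasing sequence of Stein manifolds with non-Stein limit (which can be arranged with $X_j\Subset X_{j+1}$ by passing to strictly pseudoconvex sublevel sets) therefore also fails holomorphic convexity. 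Consequently the ``convexity bookkeeping'' you defer --- exhausting compacts $K_j\Subset X_j$ that are simultaneously $\Ol(X_j)$- and $\Ol(X_{j+1})$-convex, on which functions of $\Ol(X_j)$ can be approximated by functions of $\Ol(X_{j+1})$ while prescribing a large value at the new point $x_{j+1}$ --- is not a routine verification that can be pushed through by passing to the reductions: it is exactly where the Runge hypothesis is indispensable, and your hope that hulls ``behave well under the maps $\sigma_j$'' fails for the same reason you already noted that the $\sigma_j$ need not be embeddings.

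Once the Runge hypothesis is reinstated, your plan closes up and becomes the paper's argument: by Lemma \ref{fatto7} each $Y_j$ sits in $Y_{j+1}$ as an open (in fact Runge) subset; an $\Ol(X_j)$-convex compact $K_j$ is saturated, i.e.\ $K_j=\pi_j^{-1}(\pi_j(K_j))$ with $\pi_j(K_j)$ $\Ol(Y_j)$-convex (as observed in the proof of Theorem \ref{fatto8}), hence $K_j$ is also $\Ol(X_{j+1})$-convex, and $K_j\cup\pi_{j+1}^{-1}(\pi_{j+1}(x_{j+1}))$ is $\Ol(X_{j+1})$-convex as well; Theorem \ref{fatto8} then yields $f_{j+1}\in\Ol(X_{j+1})$ with $\|f_{j+1}-f_j\|_{K_j}<2^{-j-1}$ and $|f_{j+1}(x_{j+1})-(j+1)|<2^{-j-1}$, and the telescoping limit is a global function unbounded on the escaping sequence, which gives holomorphic convexity of $Y$. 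Your first sketch (a direct limit $Y_\infty$ of the reductions with a proper map $Y\to Y_\infty$) would need Runge twice over --- for the $\sigma_j$ to be open embeddings and for the Stein union theorem applied to $Y_\infty$ --- and properness of the limit map is not automatic; the paper rightly does not take that route.
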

\begin{proof}Take a sequence $\{x_k\}\subset Y$ such that for every compact set $L\subset Y$, there are infinitely many elements of the sequence outside $L$; it is not restrictive to suppose that $x_j\in X_{j}\setminus X_{j-1}$, where $X_{-1}=\emptyset$. Let $\pi_j:X_j\to Y_j$ be the Remmert reduction, by hypothesis and by Lemma \ref{fatto7}, for every $j$, the points $\pi_j(x_1),\ldots, \pi_j(x_j)$ are all different.

Now, for each $j\geq 1$ select a compact $K_j\Subset X_j$ such that
\begin{enumerate}
\item $x_k\in K_j$ for $k\leq j$
\item $K_j$ is $\Ol(X_j)$-convex
\item $X_{j-1}\subset K_j$.
\end{enumerate}

Set $f_1:X_1\to\C$ to be the constant function $1$; suppose we have defined $f_1,\ldots, f_{j}$, then we apply Theorem \ref{fatto8} to obtain $f_{j+1}:X_{j+1}\to\C$ holomorphic such that $|f_{j+1}(x)-f_{j}(x)|<2^{-j-1}$ for $x\in K_j$ and $|f_{j+1}(x_{j+1})-(j+1)|<2^{-j-1}$ (as $K_j\Subset X_j$ and $x_{j+1}\in X_{j+1}\setminus X_j$, the set $K_j\cup \pi^{-1}(\pi(x_{j+1}))$ is $\Ol(X_{j+1})$-convex and $\pi^{-1}(\pi(x_{j+1}))$ is one of its connected components).

Therefore, we have a sequence of holomorphic functions $f_j:X_j\to\C$ such that
$$k-1\leq|f_j(x_k)|\leq k+1\qquad k=1,\ldots, j$$
and, if $j>k$, then
$$\|f_{j+1}-f_j\|_{X_k}=\sup_{x\in X_k}|f_{j+1}(x)-f_{j}(x)|<2^{-j-1}\;.$$

Now, choose $k>0$ integer and consider the sequence $\{f_j\vert_{\overline{X_k}}\}_{j>k}$; we have that
$$\|f_{j+h}-f_{j}\|_{X_k}\leq \sum_{m=1}^h\|f_{j+m}-f_{j+m-1}\|_{X_k}\leq \sum_{m=1}^h2^{-j-m}\leq 2^{-j}$$
therefore the sequence converges uniformly on $\overline{X_k}$. Then, we can define $f=\lim f_j$ as a holomorphic function $f:Y\to \C$.

Moreover, $k-1\leq f(x_k)\leq k+1$ for all $k>0$, so $Y$ is holomorphically convex. \end{proof}

As an application, we have the following.
\begin{teorema}Suppose $M$ is a manifold, endowed with a plurisubharmonic exhaustion $\phi:M\to\R$, and suppose there exists a sequence of real numbers $c_j\to+\infty$ such that
$$X_j=\{x\in M\ :\ \phi(x)\leq c_j\}$$
has a smooth, strictly pseudoconvex boundary (i.e. $\phi$ is smooth and strictly plurisubharmonic in a neighborhood of $bX_j$) for each $j$. Then $M$ is a modification of a Stein space along at most countably many points.\end{teorema}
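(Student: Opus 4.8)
The plan is to realize $M$ as an increasing union of relatively compact, holomorphically convex, Runge domains, apply Theorem \ref{fatto9}, and then control the fibres of the Remmert reduction of $M$. Set $D_j=\{x\in M:\phi(x)<c_j\}$, the interior of $X_j$. Since $\phi$ is an exhaustion we have $D_j\Subset D_{j+1}$ and $\bigcup_j D_j=M$. Because $bX_j$ is a smooth strictly pseudoconvex hypersurface, each $D_j$ is a relatively compact strictly pseudoconvex domain, hence holomorphically convex by Grauert's solution of the Levi problem; let $\pi_j\colon D_j\to Y_j$ be its Remmert reduction, with $Y_j$ Stein. Moreover, again by Grauert, $\pi_j$ contracts only finitely many disjoint positive-dimensional compact connected analytic subsets, namely the finitely many connected components $C^{(j)}_1,\dots,C^{(j)}_{m_j}$ of the maximal compact analytic subset $A_j\Subset D_j$ (each being one of the positive-dimensional, necessarily connected, fibres of $\pi_j$); and every positive-dimensional compact connected analytic subset of $D_j$ lies in one of the $C^{(j)}_k$, since holomorphic functions on $D_j$ are constant on it and so it maps to a point under $\pi_j$.

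Next I would verify that $D_j$ is Runge in $D_{j+1}$, which is what is needed to invoke Theorem \ref{fatto9}. A plurisubharmonic function is constant on every compact connected analytic subspace, so $\phi$ is constant on the fibres of $\pi_{j+1}$ and therefore descends to a continuous plurisubharmonic exhaustion $\bar\phi$ of the Stein space $Y_{j+1}$, with $\pi_{j+1}^{-1}(\{\bar\phi<c_j\})=D_j$. Sublevel sets of plurisubharmonic exhaustions of Stein spaces are Runge, so $\{\bar\phi<c_j\}$ is Runge in $Y_{j+1}$; since $\pi_{j+1}$ is proper and induces isomorphisms $\Ol(D_{j+1})\cong\Ol(Y_{j+1})$ and $\Ol(D_j)\cong\Ol(\{\bar\phi<c_j\})$, pulling the approximating functions back through $\pi_{j+1}$ shows that $\Ol(D_{j+1})\to\Ol(D_j)$ has dense image. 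Theorem \ref{fatto9} now gives that $M=\bigcup_j D_j$ is holomorphically convex; let $\pi\colon M\to Y$ be its Remmert reduction, with $Y$ Stein.

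Finally I would show that the modification $\pi$ is biholomorphic away from a discrete, hence at most countable, subset of $Y$. Let $E\subseteq M$ be the union of the positive-dimensional fibres of $\pi$ and put $e=\pi(E)$. Over $Y\setminus e$ the map $\pi$ is proper with singleton fibres and satisfies $\pi_*\Ol_M=\Ol_Y$, hence is a biholomorphism there (as in Proposition \ref{fatto1} via Stein factorization). To see that $e$ is discrete, suppose $y_n\to y_\infty$ in $Y$ with the $y_n\in e$ pairwise distinct; by properness $\pi^{-1}(\{y_n:n\in\N\}\cup\{y_\infty\})$ is compact, so it is contained in some $D_j$. Each fibre $\pi^{-1}(y_n)$ is then a positive-dimensional compact connected analytic subset of $D_j$, hence contained in some component $C^{(j)}_{k(n)}$ of $A_j$; but $C^{(j)}_k$ is compact connected analytic, so $\pi(C^{(j)}_k)$ is a single point of $Y$, which must be $y_n$ because $C^{(j)}_k$ meets $\pi^{-1}(y_n)$. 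Thus $n\mapsto k(n)$ is injective and the $y_n$ take at most $m_j$ values, a contradiction. Hence $e$ is closed and discrete in $Y$; as $Y$ embeds properly in some $\C^N$ (Theorem \ref{fatto6}) it is second countable, so $e$ is at most countable, and $\pi\colon M\to Y$ exhibits $M$ as a modification of the Stein space $Y$ along $e$. The delicate points are the Runge verification in the second step and the appeal to Grauert's finiteness of the exceptional components; the rest is bookkeeping.
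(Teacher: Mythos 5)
Your overall strategy coincides with the paper's: push $\phi$ down to the Remmert reduction of each strongly pseudoconvex sublevel domain, invoke Narasimhan to get that the smaller sublevel set is Runge in the Stein reduction, lift the Runge property back through the reduction, apply Theorem \ref{fatto9}, and then count the blown-down fibres. Your endgame is sound and, if anything, more explicit than the paper's: the paper gets finiteness of the degeneracy locus in each level band from the fact that a compact analytic subset of a Stein space is finite, while you get closedness and discreteness from Grauert's finiteness of the exceptional components of each strongly pseudoconvex $D_j$ together with the observation that each positive-dimensional fibre of $\pi$ sits inside one such component. (A minor slip: second countability of $Y$ does not come from Theorem \ref{fatto6}, which is the universal property; it follows from $\pi$ being proper and surjective from the $\sigma$-compact space $M$, or from Remmert's embedding theorem.)

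The genuine gap is the sentence asserting that $\phi$ \emph{``therefore descends to a continuous plurisubharmonic exhaustion $\bar\phi$ of $Y_{j+1}$''}. Constancy on fibres gives a well-defined function, and continuity and the exhaustion property follow from properness of $\pi_{j+1}$; but plurisubharmonicity of $\bar\phi$ at the images of the positive-dimensional fibres is not formal. Those are exactly the points where $\pi_{j+1}$ is not biholomorphic and where $Y_{j+1}$ may be singular, and plurisubharmonicity does not simply push forward under a proper contraction: on a complex space it must be checked against local embeddings (or analytic discs), and the naive pushforward could a priori fail the required upper-regularity there. This is precisely where the paper works: $\bar\phi$ is psh on $Y_{j+1}\setminus S_{j+1}$, where $\pi_{j+1}$ is a biholomorphism, and locally bounded near the analytic set $S_{j+1}$, so by \cite{GraRem}*{Satz 3} it extends from the complement to a psh function $\tilde\phi$ on all of $Y_{j+1}$; one must then identify $\tilde\phi$ with $\bar\phi$ at points of $S_{j+1}$, which the paper does by a disc/limsup computation through a regular point of the contracted fibre, since the extension is a priori only an upper-semicontinuous regularization and could exceed the pushforward value at those points. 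Without this step you cannot apply Narasimhan's theorem, so you should either reproduce that extension-plus-identification argument or cite a statement guaranteeing that fibre-constant psh functions descend to psh functions under Remmert reductions. With that inserted, the rest of your proposal goes through.
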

\begin{proof}Each $X_j$ is holomorphically convex  and, actually, a modification of a Stein space, by \cite{grau1}*{Theorem 1}.

Let us now consider the Remmert reduction $\pi_j:X_j\to Y_j$. As we saw, $Y_j$ is obtained as a quotient with respect to the relation $R$ defined in the first section, so $\pi_j^{-1}(y)$ is a compact complex subspace of $X_j$ for every $y\in Y_j$, therefore $\phi$ is constant on $\pi^{-1}_j(y)$ for every $y\in Y_j$. We define $\psi_j:Y_j\to \R$ by setting $\psi_j(y)=\phi(\pi_j^{-1}(y))$. This functions is clearly continuous; set
$$S_j=\{y\in Y_{j}\ :\ \dim \pi_j^{-1}(y)>0\}\;,$$
then $\pi_j$ is a biholomorphism from $X_j\setminus\pi_j^{-1}(S_j)\to Y_j\setminus S_j$. This implies that $\psi_j$ is plurisubharmonic on $Y_j\setminus S_j$; we know that $S_j$ is a proper complex subspace of $Y_j$ (because $X_j$ is a modification of $Y_j$) and that $\psi_j$ is bounded near every point of $S_j$, then by \cite{GraRem}*{Satz 3} $\psi_j\vert_{Y_j\setminus S_j}$ extends uniquely to a plurisubharmonic function on $Y_j$. Let us call such extension $\tilde{\psi}_j$.

Let $y\in S_j$ and let $S_y=\pi_j^{-1}(y)$. We know that $\phi$ and $\tilde{\psi}_j\circ\pi_j$ are both constant on $S_y$; let $j:\Delta\to M$ be a complex disc intersecting $S_y$ at a single regular point $p\in S_y$, such that $j(0)=p$. The functions $\phi\circ j$ and $\tilde{\psi}_j\circ\pi_j\circ j$ are both subharmonic on $\Delta$ and coincide in $\Delta\setminus\{0\}$, but then
$$\tilde{\psi}_j\circ\pi_j\circ j(0)=\limsup_{\zeta\to 0} \tilde{\psi}_j\circ\pi_j\circ j(\zeta)=\limsup_{\zeta\to 0}\phi\circ j(\zeta)=\rho_{j-1}\circ j(0)\;.$$
So $\phi(p)=\tilde{\psi}_j\circ\pi_{j}(p)$, therefore $\tilde{\psi}_j(y)=\psi_j(y)$, thus proving that $\psi_j$ is (continous and) plurisubharmonic on $Y_j$.

Therefore, by \cite{Nar}*{Corollary 1, Section 4}, $Y_{j-1}=\{\psi_j<c_{j-1}\}$ is Runge in $Y_j$.

This easily implies, by \cite{KK}*{Lemma 63.4}, that $X_{j-1}$ is Runge in $X_j$. We are now in position to use Theorem \ref{fatto9}, in order to prove that $M$ is holomorphically convex.

Finally, let $\pi:M\to Y$ be the Remmert reduction of $M$; as there exists $\Omega\subset M$ such that $\phi$ is strictly plurisubharmonic on $\Omega$,  by the same reasoning from above, $\pi\vert_{\Omega}:\Omega\to \pi(\Omega)$ is biholomorphic. Therefore $Y$ is an irreducible (as $M$ is a connected manifold) Stein space of the same dimension of $M$, then $M$ is a modification of a Stein space.

Moreover, let
$$S=\{y\in Y\ :\ \dim \pi^{-1}(y)>0\}\;.$$
Then $S$ cannot intersect any $bY_j$, for $j>0$, because every $Y_j$ is strictly pseudoconvex; therefore the sets
$$S^j=S\cap\{y\in Y : c_{j-1}<\phi(\pi^{-1}(y))<c_{j}\}$$
are connected components of $S$ and $S^j\Subset Y_{j}$. As $Y_j$ is Stein, $S^j$ consists a finite number of points, so $M$ is the modification of the Stein space $Y$ along $S$, a collection of at most countably many points.\end{proof}

\begin{bibdiv}
\begin{biblist}
\bib{BehSte}{article}{
   author={Behnke, H.},
   author={Stein, K.},
   title={Konvergente Folgen von Regularit\"{a}tsbereichen und die
   Meromorphiekonvexit\"{a}t},
   language={German},
   journal={Math. Ann.},
   volume={116},
   date={1939},
   number={1},
   pages={204--216},
   issn={0025-5831},
   review={\MR{1513225}},
   doi={10.1007/BF01597355},
}
\bib{Car}{article}{
   author={Cartan, Henri},
   title={Quotients of complex analytic spaces},
   conference={
      title={Contributions to function theory},
      address={Internat. Colloq. Function Theory, Bombay},
      date={1960},
   },
   book={
      publisher={Tata Institute of Fundamental Research, Bombay},
   },
   date={1960},
   pages={1--15},
   review={\MR{0139769}},
}
\bib{DocGra}{article}{
   author={Docquier, Ferdinand},
   author={Grauert, Hans},
   title={Levisches Problem und Rungescher Satz f\"{u}r Teilgebiete Steinscher
   Mannigfaltigkeiten},
   language={German},
   journal={Math. Ann.},
   volume={140},
   date={1960},
   pages={94--123},
   issn={0025-5831},
   review={\MR{0148939}},
   doi={10.1007/BF01360084},
}

\bib{For}{article}{
   author={Fornaess, John Erik},
   title={An increasing sequence of Stein manifolds whose limit is not
   Stein},
   journal={Math. Ann.},
   volume={223},
   date={1976},
   number={3},
   pages={275--277},
   issn={0025-5831},
   review={\MR{0417448}},
   doi={10.1007/BF01360958},
}
\bib{grau1}{article}{
   author={Grauert, Hans},
   title={On Levi's problem and the imbedding of real-analytic manifolds},
   journal={Ann. of Math. (2)},
   volume={68},
   date={1958},
   pages={460--472},
   issn={0003-486X},
   review={\MR{0098847}},
   doi={10.2307/1970257},
}
\bib{GraRem2}{article}{
   author={Grauert, Hans},
   author={Remmert, Reinhold},
   title={Konvexit\"{a}t in der komplexen Analysis. Nicht-holomorph-konvexe
   Holomorphiegebiete und Anwendungen auf die Abbildungstheorie},
   language={German},
   journal={Comment. Math. Helv.},
   volume={31},
   date={1956},
   pages={152--160, 161--183},
   issn={0010-2571},
   review={\MR{0088028}},
   doi={10.1007/BF02564357},
}
\bib{GraRem}{article}{
   author={Grauert, Hans},
   author={Remmert, Reinhold},
   title={Plurisubharmonische Funktionen in komplexen R\"{a}umen},
   language={German},
   journal={Math. Z.},
   volume={65},
   date={1956},
   pages={175--194},
   issn={0025-5874},
   review={\MR{0081960}},
   doi={10.1007/BF01473877},
}
\bib{KK}{book}{
   author={Kaup, Ludger},
   author={Kaup, Burchard},
   title={Holomorphic functions of several variables},
   series={De Gruyter Studies in Mathematics},
   volume={3},
   note={An introduction to the fundamental theory;
   With the assistance of Gottfried Barthel;
   Translated from the German by Michael Bridgland},
   publisher={Walter de Gruyter \& Co., Berlin},
   date={1983},
   pages={xv+349},
   isbn={3-11-004150-2},
   review={\MR{716497}},
   doi={10.1515/9783110838350},
}
\bib{Mar}{article}{
   author={Markoe, Andrew},
   title={Runge families and inductive limits of Stein spaces},
   language={English, with French summary},
   journal={Ann. Inst. Fourier (Grenoble)},
   volume={27},
   date={1977},
   number={3},
   pages={vi, 117--127},
   issn={0373-0956},
   review={\MR{0590069}},
}
\bib{Nar}{article}{
   author={Narasimhan, Raghavan},
   title={The Levi problem for complex spaces. II},
   journal={Math. Ann.},
   volume={146},
   date={1962},
   pages={195--216},
   issn={0025-5831},
   review={\MR{0182747}},
   doi={10.1007/BF01470950},
}
\bib{Sil}{article}{
   author={Silva, Alessandro},
   title={Rungescher Satz and a condition for Steinness for the limit of an
   increasing sequence of Stein spaces},
   language={English, with French summary},
   journal={Ann. Inst. Fourier (Grenoble)},
   volume={28},
   date={1978},
   number={2},
   pages={vi, 187--200},
   issn={0373-0956},
   review={\MR{0508090}},
}

\bib{Ste}{article}{
   author={Stein, Karl},
   title={\"{U}berlagerungen holomorph-vollst\"{a}ndiger komplexer R\"{a}ume},
   language={German},
   journal={Arch. Math. (Basel)},
   volume={7},
   date={1956},
   pages={354--361},
   issn={0003-889X},
   review={\MR{0084836}},
   doi={10.1007/BF01900686},
}
\bib{Tov}{article}{
   author={Tovar, L. M.},
   title={Open Stein subsets and domains of holomorphy in complex spaces},
   conference={
      title={Topics in several complex variables},
      address={Mexico},
      date={1983},
   },
   book={
      series={Res. Notes in Math.},
      volume={112},
      publisher={Pitman, Boston, MA},
   },
   date={1985},
   pages={183--189},
   review={\MR{805603}},
}
  \end{biblist}
\end{bibdiv}

\end{document}